\newcolumntype{L}{>{\displaystyle}l}
\newcolumntype{C}{>{\displaystyle}c}
\newcolumntype{R}{>{\displaystyle}r}
\newcommand{\N}{\ensuremath{\mathbb{N}}}
\newcommand{\g}{\gamma}
\newcommand{\f}{\varphi}
\newcommand{\al}{\alpha}
\newcommand{\C}{\ensuremath{\mathcal{C}}}
\newcommand{\sgn}{\mathrm{sign}}
\newtheorem {theorem} {Theorem}
\newtheorem {corollary} [theorem]{Corollary}
\newtheorem {lemma}  [theorem]{Lemma}
\newtheorem {conjecture} [theorem]{Conjecture}
\begin{document}
\renewcommand{\arraystretch}{1.5}

\title[A simple solution to the Braga--Mello conjecture]
{A simple solution to the\\ Braga--Mello conjecture}

\author[D.D. Novaes and E. Ponce]
{Douglas D. Novaes$^{1,2}$ and Enrique Ponce$^3$}

\address{$^1$ Departament de Matem\`{a}tiques,
Universitat Aut\`{o}\-noma de Barce\-lona, 08193 Bellaterra, Barcelona,
Catalonia, Spain} \email{ddnovaes@mat.uab.cat}

\address{$^{2}$  Departamento de Matem\'{a}tica, Universidade Estadual de Campinas, CP 6065, 13083-859, Campinas, SP, Brazil.} \email{ddnovaes@ime.unicamp.br}

\address{$^{3}$  Departamento de Matem\'{a}tica Aplicada II, Escuela T\'ecnica Superior de Ingenier\'{\i}a,  Camino de los Descubrimientos s.n., 41092 Sevilla, Spain} \email{eponcem@us.es}

\subjclass[2010]{34C05, 34C07, 37G15}

\keywords{Discontinuous piecewise linear differential system, limit cycle, nonsmooth differential system}

\maketitle

\begin{abstract}
Recently Braga and Mello conjectured that for a given $n\in\N$ there is a piecewise linear system with two zones in the plane with exactly $n$ limit cycles. In this paper we prove a result from which the conjecture is an immediate consequence. Several explicit examples are given where location and stability of limit cycles are provided.
\end{abstract}

\section{Introduction and statement of the main results}

The computation of upper bounds for the number of limit cycles in all possible configurations within the family of planar piecewise linear differential systems with two zones has been the subject of some recent papers. Assuming that the separation boundary is a straight line, Han and Zhang \cite{HanZhang} conjectured in 2010  that for such planar piecewise linear systems there can be at most two limit cycles. However,  Huan and Yang \cite{HuanYang} promptly gave a negative answer to this conjecture by means of a numerical example with three limit cycles under a focus-focus configuration. Such counter-intuitive example led researchers to look for rigorous proofs of this fact, see \cite{LlibrePonce2012} for a computer-assisted proof, and \cite{FPT2014} for an analytical proof under a more general setting.

When the boundary between the two linear zones is not a straight line any longer, it is possible to obtain more than three limit cycles. Thus, by resorting to the same example introduced in \cite{HuanYang} and analyzed in \cite{LlibrePonce2012},  Braga and Mello  studied in \cite{BM} some members of the following class of discontinuous piecewise linear differential system  with two zones
\begin{equation}\label{s1}
X'=\left\{
\begin{array}{l}
G^- X\quad\textrm{if}\quad H(X,p)<0,\vspace{0.2cm}\\
G^+ X\quad\textrm{if}\quad H(X,p)\geq 0,
\end{array}
\right.
\end{equation}
where the prime denotes derivative with respect to the independent variable $t$, $p$ is a parameter vector, $X=(x,y)$, and
\[
G^{\pm}=\left(\begin{array}{cc} g_{11}^{\pm}&g_{12}^{\pm}\\ g_{21}^{\pm}&g_{22}^{\pm}\end{array}\right)
\]
are matrices with real entries satisfying the following assumptions:
\begin{itemize}
\item[$(H1)$] $g_{12}^{\pm}<0$,
\item[$(H2)$] $G^-$ has complex eigenvalues with negative real parts while $G^+$ has complex eigenvalues with positive real parts, and
\item[$(H3)$] the function $H$ is at least continuous.
\end{itemize}
After using some broken line as the boundary between linear zones, Braga and Mello in \cite{BM} put in evidence the important role of the separation boundary in the number of limit cycles. They obtained examples with different number of limit cycles, and accordingly stated the following conjecture in \cite{BM}.
\begin{conjecture}\label{c1}
Given $n\in\N$ there is a piecewise linear system with two zones in the plane with exactly $n$ limit cycles.
\end{conjecture}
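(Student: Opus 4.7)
The plan is to prove a constructive proposition from which Conjecture~\ref{c1} is an immediate consequence: for each $n\in\N$, one can exhibit matrices $G^\pm$ satisfying (H1)--(H2) together with a continuous separation function $H$ such that \eqref{s1} has exactly $n$ hyperbolic limit cycles. The reduction is the classical one: fix a one-dimensional section $\Sigma$ transverse to both flows, define the half-return maps $\pi^\pm\colon\Sigma\to\Sigma$ through the zones $\{H\geq 0\}$ and $\{H<0\}$, and study the displacement function $d=\pi^-\circ\pi^+-\mathrm{id}_\Sigma$; hyperbolic limit cycles correspond bijectively to its transverse zeros.

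To keep $d$ tractable, I would take $G^\pm$ as simple as possible within (H1)--(H2) --- for instance, standard rotation-plus-scaling matrices producing logarithmic-spiral flows around foci placed on opposite sides of a baseline separation curve. With these linear parts fixed, the half-return maps admit explicit (possibly implicit) closed-form expressions in terms of the boundary geometry, so $d$ becomes an explicit functional of $H$ that can be read off from the spiral formulas together with the intersection points of each spiral with $\{H=0\}$.

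The construction of $H$ would then proceed inductively. Starting from a baseline boundary, say a straight line, for which $d$ has at most finitely many simple zeros (e.g.\ none, by a suitable choice of the foci), I would locally modify $H$ at $n$ disjoint, well-separated positions by introducing narrow bumps or bends. Each modification perturbs $d$ only in a small neighborhood of the chosen position and, with appropriate sign and amplitude, introduces exactly one new transverse zero; previously created zeros persist as simple zeros by continuity. After $n$ such modifications $d$ has exactly $n$ transverse zeros, and hence \eqref{s1} has exactly $n$ hyperbolic limit cycles.

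The main obstacle is the per-bump analysis: one must verify that a localized modification of $H$ produces precisely one new sign change of $d$ nearby and that earlier transverse zeros are preserved under all subsequent perturbations. This reduces to a first-order sensitivity computation for $\pi^\pm$ with respect to local variations of $H$, combined with the implicit function theorem to propagate transversality. Assumptions (H1) and (H3) are preserved by construction, and (H2) is built into the chosen $G^\pm$.
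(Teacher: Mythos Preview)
Your displacement-function framework matches the paper's, but the construction you propose is different. The paper does \emph{not} begin from a configuration without periodic orbits and create them one bump at a time. Instead it fixes $G^\pm$ as in \eqref{vf} so that, with the straight-line boundary $x=0$, the system is a global nonlinear \emph{center}: every orbit is periodic and the displacement vanishes identically. The boundary is then deformed to $x=h(y)$ for $y>0$ (keeping $x=0$ for $y\le 0$), and the central computation (Lemma~\ref{L1}) shows that the resulting displacement $f(y)$ satisfies $\sgn f(y)=\sgn h(y)$ for every $y>0$. Limit cycles therefore correspond bijectively to the positive zeros of $h$, with stability read off from the sign of $h'(y^*)$, and one concludes by writing down an explicit $h$ with exactly $n$ simple zeros (Corollary~\ref{co1}). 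No inductive construction, persistence argument, or sensitivity computation is needed; everything is an explicit identity.

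Your bump-by-bump scheme has a real gap at the step you yourself flag. First, if the baseline $d$ has a fixed sign on the section (which is what ``no zeros'' means away from the singular point), a localized modification that pushes $d$ through zero near some $s_0$ will generically produce an \emph{even} number of new sign changes, since $d$ must return to its original sign outside the bump; arranging for exactly one new transverse zero per bump is not automatic and you give no mechanism for it. Second, the claim that a local change in $H$ perturbs $d$ only near the chosen section value presupposes that distinct orbits meet the modified boundary arc disjointly; this nesting holds in the paper precisely because both linear fields share the origin as their common focus and each orbit crosses the curve $x=h(y)$, $y>0$, exactly once, but it is not guaranteed for ``foci placed on opposite sides'', where orbits may intersect the boundary more than twice. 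The paper's center trick replaces all of this analysis: starting from $d\equiv 0$, the sign identity $\sgn f=\sgn h$ gives direct global control over the number and location of limit cycles in terms of $h$ alone.
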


In this paper, we prove that the above conjecture is true by showing how to perturb a rather simple vector field in order to get as many limit cycles as wanted. We provide several concrete examples. Furthermore, we show that the involved methodology allow us to locate the position of the limit cycles and determine their stability. 

We start from the normal form given in \cite{FPT} for systems of kind \eqref{s1} and after selecting an appropriate value for $\g>0$, we take 
\begin{equation}\label{vf}
G^{\pm}=\left(\begin{array}{cc} {\pm}2\g&-1\\ \g^2+1&0\end{array}\right),
\end{equation}
and
\begin{equation}\label{vfborder}
H(X)=\left\{
\begin{array}{ll}
x &\textrm{if} \quad y\leq 0,\vspace{0.2cm}\\
x-h(y)& \textrm{if}\quad y> 0,
\end{array}
\right.
\end{equation}
where $h(y)$ is a $\C^1$ function such that $h(0)=0$. We also assume that for $y>0$ the following hypotheses:
\begin{itemize}
\item[$(H1')$] $|h(y)|<y/\g$,
\item[$(H2')$] $h(y)(2\g-(1+\g^2)h'(y))<y$, and
\item[$(H3')$] $h(y)(2\g+(1+\g^2)h'(y))>-y$.
\end{itemize}
As shown in Section \ref{sec2}, Hypothesis $H1'$ is just assumed to facilitate the computation of solutions, whilst Hypotheses $H2'$ and $H3'$ allow us to assure that the two linear vector fields can be concatenated across the discontinuity manifold in the natural way, so avoiding the existence of sliding sets, see  \cite{KRG}.

\begin{figure}[t]\label{fig1}
\begin{minipage}{0.45\linewidth}
\centering
\includegraphics[width=7cm]{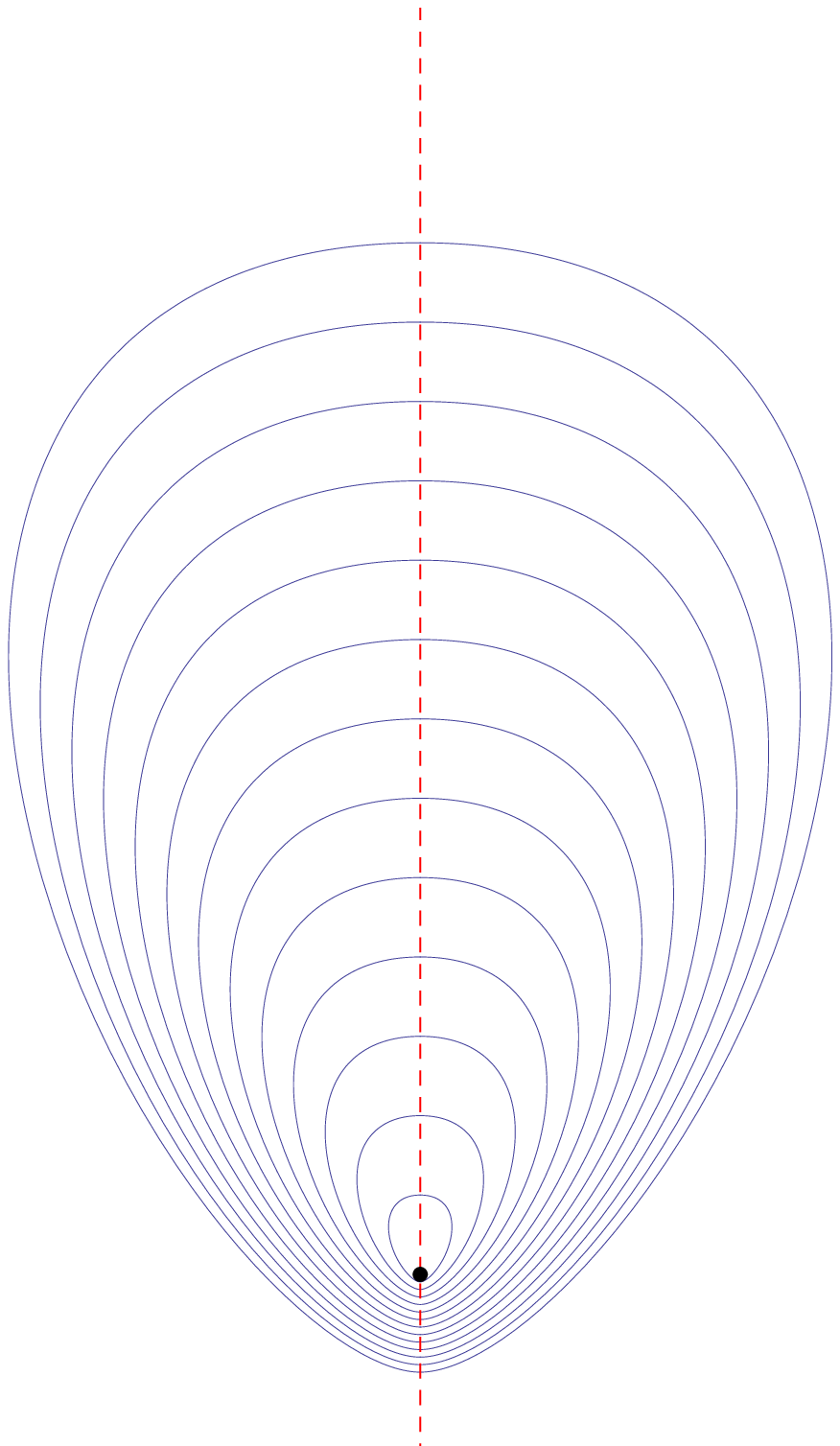}
\end{minipage}
\begin{minipage}{0.45\linewidth}
\centering
\includegraphics[width=7cm]{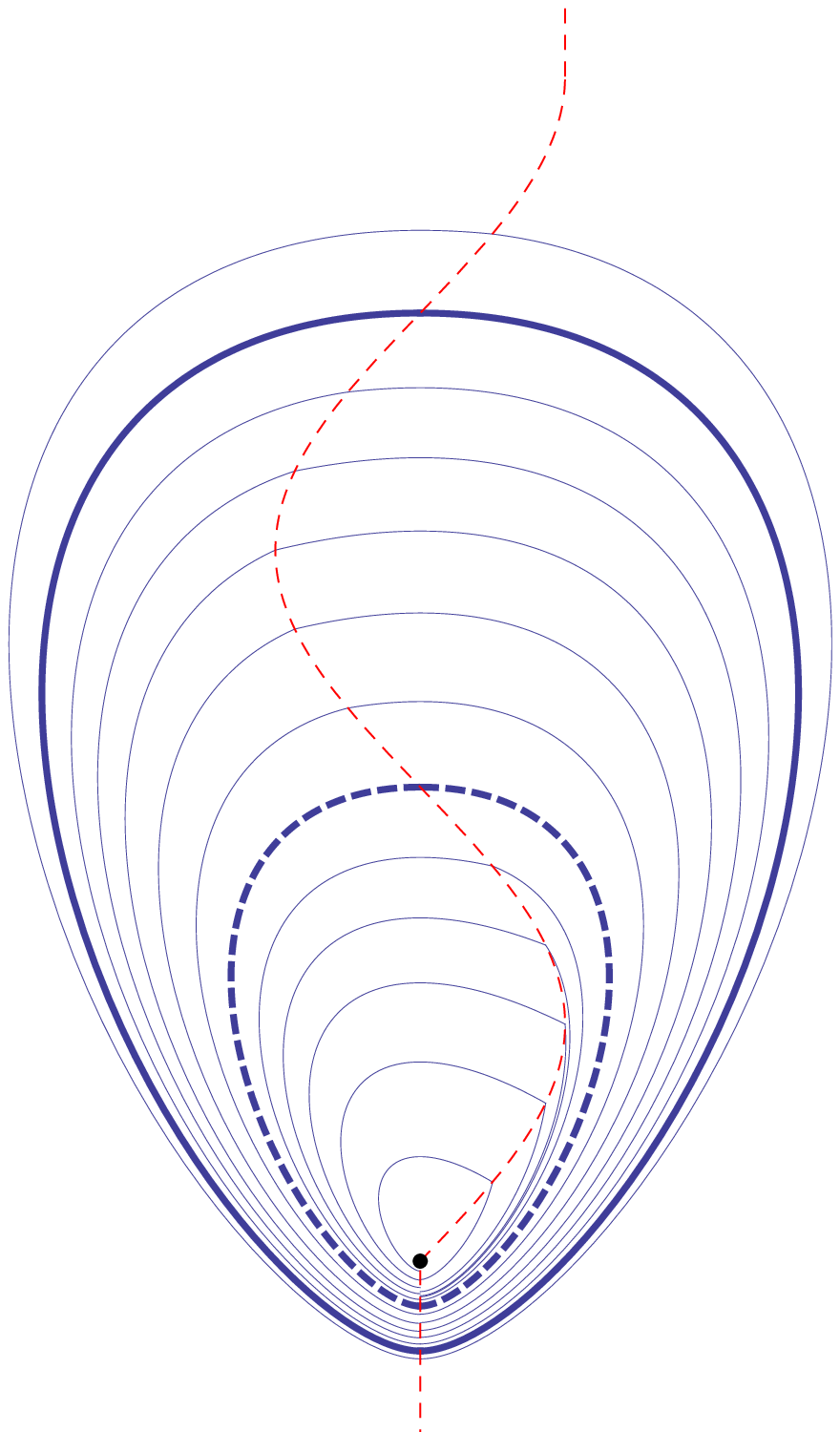}
\end{minipage}
\caption{Left: The unperturbed piecewise linear center for $\gamma=0.75$. Right: Here we consider the system of Corollary \ref{co1} for $n=2$ and $\gamma=0.75$. The continuous bold (dashed) closed curve surrounding the origin represent one stable (unstable) limit cycle, while the remaining orbits are not closed any longer. The discontinuity set is represented by the dashed line crossing the $y$--axis two times for $y>0$.
}
\end{figure}

It should be noticed that when $h(y)\equiv 0$ the boundary separating the two linear zones is a straight line, namely the $y$-axis. In such a case, we have indeed a continuous vector field with a global nonlinear center at the origin, since from each side the origin is a focus and the expansion in the right part is perfectly balanced with the contraction in the left part. Furthermore, all the periodic orbits of the center are homothetic. See the left panel of Figure \ref{fig1} and Proposition 4.2 of \cite{FPT} for more details.

In the case of a non-vanishing function $h$, the discontinuity set for system \eqref{s1} is given by 
$$\Sigma=\{(h(y),y):\,y>0, h(y)\ne 0\},$$
so that the balance between expansion and contraction is lost; as it will be seen, some periodic orbits from the original center configuration can persist, becoming isolated and so leading to limit cycles.  

\smallskip

Our main result is the following.
\begin{theorem}\label{t1} Assume $\g>0$ and consider system \eqref{s1}-\eqref{vf} and the switching manifold $H(X)=0$ as in \eqref{vfborder}, where $h$ satisfies Hypotheses $H1'$, $H2'$ and $H3'$.
For a given positive real number $y^*$ there exists a periodic solution of system \eqref{s1} passing through $(h(y^*),y^*)$ if and only if $h(y^*)=0$, in this case the periodic solution cut the $y$--axis at the points $(0,y^*)$ and $(0,-e^{-\g\pi}y^*)$. Moreover if $h'(y^*)<0$ $(\,h'(y^*)>0\,)$ this periodic solution is a  stable (unstable) limit cycle.
\end{theorem}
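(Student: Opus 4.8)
The plan is to base everything on the explicitly integrable linear pieces together with two quadratic first integrals. A direct computation shows that the flows of $G^{\pm}$ admit
\[
V^{\pm}(x,y)=x^2+(\g x\mp y)^2,\qquad \tfrac{d}{dt}V^{\pm}=\pm 2\g\,V^{\pm}\ \text{ along }\ X'=G^{\pm}X,
\]
so $V^{\pm}$ is multiplied by $e^{\pm 2\g t}$ along the corresponding trajectories, and $V^+=V^-=y^2$ on the $y$-axis. The second ingredient is the identity $e^{\pi G^{\pm}}=-e^{\pm\g\pi}I$, read off from the explicit exponential, which says that the time-$\pi$ map of each piece is a central symmetry followed by a homothety. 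First I would record that Hypotheses $H2'$ and $H3'$ are exactly the transversality conditions $\langle\nabla(x-h(y)),G^{\pm}X\rangle<0$ on the upper boundary, so every orbit crosses $\{x=h(y)\}$ from the $G^+$-region into the $G^-$-region with no sliding, while $H1'$ confines the boundary to the cone $|x|<y/\g$. Together these guarantee that an orbit alternately performs one crossing of the lower boundary (the negative $y$-axis) and one crossing of the upper boundary, so the first-return map $\Pi$ on the negative $y$-axis, $(0,-b)\mapsto(0,-b')$, is well defined, and periodic orbits correspond to its fixed points.

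For sufficiency, assume $h(y^*)=0$. Then $(0,y^*)$ lies on the switching set, flowing by $G^-$ for time $\pi$ sends it to $e^{\pi G^-}(0,y^*)=(0,-e^{-\g\pi}y^*)$, and flowing the latter by $G^+$ for time $\pi$ returns it to $e^{\pi G^+}(0,-e^{-\g\pi}y^*)=(0,y^*)$. The cone condition and transversality ensure the trajectory stays strictly on the correct side of $\{x=h(y)\}$ in between, so these are genuine one-sided arcs and the orbit closes, cutting the $y$-axis exactly at $(0,y^*)$ and $(0,-e^{-\g\pi}y^*)$.

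For necessity I would compute $\Pi$ through the first integrals. If the orbit crosses the upper boundary at $C=(h(\eta),\eta)$, incoming from $(0,-b)$ under $G^+$ in time $\tau^+$ and leaving to $(0,-b')$ under $G^-$ in time $\tau^-$, then $b=e^{-\g\tau^+}\sqrt{V^+(C)}$ and $b'=e^{-\g\tau^-}\sqrt{V^-(C)}$; since the angular speed of each piece is $1$, the times $\tau^{\pm}$ equal the angles swept in the rotation-adapted coordinates, giving $\tau^+-\tau^-=-\pi+\psi$ with $\psi=\arg\!\big((\g^2+1)h^2-\eta^2+2i\,\eta h\big)$. Writing $r=h(\eta)/\eta\in(-1/\g,1/\g)$ and using homogeneity, the return condition $b'=b$ becomes $\mathcal F(r)=0$, where
\[
\mathcal F(r)=\tfrac12\ln\frac{r^2+(\g r+1)^2}{r^2+(\g r-1)^2}+\g\Big(\arg\!\big((\g^2+1)r^2-1+2ir\big)-\pi\Big),\qquad \mathcal F(0)=0 .
\]
The crux of the argument, and the step I expect to be the main obstacle, is to show $\mathcal F$ vanishes only at $r=0$. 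I would prove this by differentiation: the logarithmic and angular contributions nearly cancel and leave the clean expression
\[
\mathcal F'(r)=\frac{-4\g(\g^2+1)\,r^2}{\big(r^2+(\g r-1)^2\big)\big(r^2+(\g r+1)^2\big)}\le 0,
\]
with equality only at $r=0$. Hence $\mathcal F$ is strictly decreasing, so $\mathcal F(r)=0\iff r=0\iff h(\eta)=0$; combined with sufficiency this gives that a periodic orbit passes through $(h(y^*),y^*)$ if and only if $h(y^*)=0$, with the stated intercepts.

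Finally, for stability I would analyze $\Pi$ near the fixed point $b_0=e^{-\g\pi}y^*$. Since $\ln(b'/b)=\mathcal F(r)$ with $r=h(\eta)/\eta$ and $\eta=\eta(b)$ increasing with $\eta(b_0)=y^*$, the displacement $\Pi(b)-b$ has the sign of $\mathcal F\big(h(\eta)/\eta\big)$, which by the strict monotonicity of $\mathcal F$ is opposite to that of $h(\eta)$ for $\eta$ near $y^*$, hence is controlled by $h'(y^*)$. Because $\mathcal F'(0)=0$ the multiplier equals $1$, so one passes to the first nonvanishing order; reading off the resulting sign of the displacement on each side of $b_0$ and tracking the orientation of $\Pi$ yields that the cycle is stable when $h'(y^*)<0$ and unstable when $h'(y^*)>0$, as claimed.
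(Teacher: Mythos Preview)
Your argument is correct in substance and takes a genuinely different route from the paper. The paper writes out the explicit solutions $\f^{\pm}$, computes the hitting times $t_L(y),t_R(y)$ by hand, and forms the displacement $f(y)=\f^-_2(t_L(y),h(y),y)-\f^+_2(t_R(y),h(y),y)$; the key step (their Lemma~\ref{L1}) is that $\sgn f(y)=\sgn h(y)$, proved by differentiating $\de_y(x)^2-\de_y(-x)^2$ in $x$. You replace all of this with the two quadratic ``first integrals'' $V^{\pm}$ together with the angular time, obtaining the single closed form $\ln(\Pi(b)/b)=\mathcal F(r)$, $r=h(\eta)/\eta$, and the very clean identity $\mathcal F'(r)=-4\g(\g^2+1)r^2/\big((r^2+(\g r-1)^2)(r^2+(\g r+1)^2)\big)$. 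This is equivalent to the paper's monotonicity statement but packaged more conceptually; it also makes transparent why $H1'$ (the cone $|h|<y/\g$) is merely a convenience and why $H2'$--$H3'$ are exactly the crossing conditions, which the paper verifies by the same inner products you describe.

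One concrete issue: your last sentence gets the stability sign backwards. From your own formulas, $\mathcal F$ is strictly decreasing with $\mathcal F(0)=0$, so $\sgn(\Pi(b)-b)=\sgn\mathcal F(r)=-\sgn h(\eta)$; since $\eta$ increases with $b$, for $b>b_0$ one has $\eta>y^*$ and hence, when $h'(y^*)>0$, $h(\eta)>0$ and $\Pi(b)<b$, i.e.\ the cycle is \emph{stable} when $h'(y^*)>0$ and unstable when $h'(y^*)<0$. This agrees with the paper's actual proof (Lemma~\ref{L2}(iii), where $f'''(y^*)=8\g(1+\g^2)e^{-\pi\g}h'(y^*)^3/y^2$) and with Corollary~\ref{co1}, so the discrepancy you matched is a typo in the theorem statement rather than something your argument should reproduce.
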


Theorem \ref{t1} is proved in Section \ref{sec2}.

\smallskip

The Braga--Mello Conjecture \ref{c1} is a direct consequence of Theorem \ref{t1} as we can see in the following corollaries. See also the right panel of Figure \ref{fig1}.

\begin{corollary}\label{co1}
If $0<\g<\sqrt{3/5}$ and
\begin{equation}\label{h}
h(y)=\dfrac{2\g}{(\g^2+1)\pi}\left\{\begin{array}{lr}\sin(\pi y), & 0\le y\le (2n+1)/2,\\
(-1)^{n}, & y>(2n+1)/2,\end{array}\right.
\end{equation}
then system \eqref{s1} has exactly $n$ limit cycles for any $n\in\N$. These limit cycles are nested and surround the origin, which is a stable singular point of focus type. The limit cycles cut the $y$--axis at the points $(0,k)$ and $(0,-k\exp(-\pi\g))$ for $k=1,\dots,n$, being stable (unstable) for $k$ even (odd). 
\end{corollary}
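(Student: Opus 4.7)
The plan is to derive Corollary~\ref{co1} as an immediate consequence of Theorem~\ref{t1}: I verify that the $h$ in \eqref{h} satisfies Hypotheses $H1'$, $H2'$, $H3'$, locate its positive zeros, and then read off the stability of each resulting limit cycle from the sign of $h'$ at those zeros.

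I would start with the smooth checks. At the joining point $y=(2n+1)/2$ the sinusoidal branch takes the value $(-1)^{n}\frac{2\gamma}{(\gamma^{2}+1)\pi}$ with derivative $\frac{2\gamma}{\gamma^{2}+1}\cos(\pi(2n+1)/2)=0$, matching the value and the zero slope of the constant branch, so $h\in C^{1}$. Hypothesis $H1'$ on the sinusoidal branch reduces via $|\sin(\pi y)|\le\pi y$ to $\frac{2\gamma}{\gamma^{2}+1}<\frac{1}{\gamma}$, i.e.\ $\gamma^{2}<1$; on the constant branch $|h|$ is bounded while $y$ grows, so $H1'$ is immediate. Both are ensured by $\gamma<\sqrt{3/5}$.

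The substantive step is verifying $H2'$ and $H3'$. Substituting $h(y)=c\sin(\pi y)$ and $h'(y)=c\pi\cos(\pi y)$ with $c=\frac{2\gamma}{(\gamma^{2}+1)\pi}$, and using $(1+\gamma^{2})c^{2}\pi=\frac{4\gamma^{2}}{(\gamma^{2}+1)\pi}$, the quantities $h(2\gamma\mp(1+\gamma^{2})h')$ simplify to $\pm\frac{4\gamma^{2}}{(\gamma^{2}+1)\pi}\sin(\pi y)(1\mp\cos(\pi y))$. The half-angle identities $1\mp\cos(\pi y)=2\sin^{2}(\pi y/2)$ or $2\cos^{2}(\pi y/2)$ and $\sin(\pi y)=2\sin(\pi y/2)\cos(\pi y/2)$ then turn both hypotheses, after splitting by sign and a linear change of variable, into the single inequality
\begin{equation*}
\frac{8\gamma^{2}}{\gamma^{2}+1}\cdot\frac{\sin^{3}u\cos u}{u}<1\qquad (u>0).
\end{equation*}
The key calculus lemma is therefore $\sin^{3}u\cos u\le u/3$ for $u\ge 0$. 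On $[0,\pi/2]$ I would set $s=\sin^{2}u$ and observe that the derivative of $f(u):=u/3-\sin^{3}u\cos u$ equals $1/3-s(3-4s)$, which vanishes only at $s=(9\pm\sqrt{33})/24\in(0,1)$; a direct check at the interior minimum $s=(9+\sqrt{33})/24$ shows $f$ remains strictly positive there, and on $[\pi/2,\infty)$ the inequality follows from $\sin^{3}u\cos u\le 3\sqrt{3}/16<\pi/6\le u/3$. Combined with $\frac{8\gamma^{2}}{3(\gamma^{2}+1)}<1\iff\gamma^{2}<3/5$ this yields $H2'$ and $H3'$ on the sinusoidal branch; on the constant branch they hold trivially because $h'\equiv 0$. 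This trigonometric estimate is the main technical obstacle, and the exact bound $\gamma<\sqrt{3/5}$ is calibrated so that it barely holds.

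With the hypotheses confirmed, Theorem~\ref{t1} finishes the argument. The positive zeros of $h$ are exactly $k=1,\dots,n$: on the sinusoidal branch $\sin(\pi y)=0$ forces $y\in\N$, and $n<(2n+1)/2<n+1$ leaves precisely $n$ such values; on the constant branch $h=\pm c\ne 0$. Theorem~\ref{t1} then produces exactly $n$ periodic orbits, the $k$-th crossing the $y$-axis at $(0,k)$ and $(0,-ke^{-\gamma\pi})$, and the linear ordering of these crossings together with uniqueness of trajectories forces the cycles to be nested and to surround the origin. Stability of each cycle is immediate from $h'(k)=\frac{2\gamma}{\gamma^{2}+1}(-1)^{k}$ and the sign criterion of Theorem~\ref{t1}, and the stability type of the origin is pinned down by the alternation pattern of the nested cycles together with the behaviour of $h$ near $y=0$.
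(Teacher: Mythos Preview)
Your argument is correct and follows the paper's approach exactly: verify $H1'$--$H3'$ for the given $h$ and then read off the zeros, the crossing points, and the stability from Theorem~\ref{t1}. The only real difference is in how $H2'$ and $H3'$ are checked. After arriving at the same expression $\frac{4\gamma^{2}}{(\gamma^{2}+1)\pi}\sin(\pi y)(1\pm\cos(\pi y))$, the paper bounds it in one line by $\frac{8\gamma^{2}}{(1+\gamma^{2})\pi}\,y$ (invoking $|\sin t|<t$) and then uses $\frac{8\gamma^{2}}{(1+\gamma^{2})\pi}<1$; your half-angle reduction to the calculus lemma $\sin^{3}u\cos u\le u/3$ is more laborious but has the merit of making the threshold in the statement transparent, since your condition $\frac{8\gamma^{2}}{3(\gamma^{2}+1)}<1$ is \emph{equivalent} to $\gamma^{2}<3/5$. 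Two small points to tidy in your write-up: the overall sign in your simplified formula should be $+$ in both cases (not $\pm$), and after the half-angle step $H3'$ produces $\sin u\cos^{3}u$ rather than $\sin^{3}u\cos u$, so the two hypotheses do not literally collapse to the single displayed inequality; however the $H3'$ case is in fact easier (the quantity is non-negative on $(0,\pi/2]$ and bounded by $3\sqrt{3}/16$ beyond), so no new estimate is needed.
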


Using Theorem \ref{t1} we can find some systems exhibiting exotic configurations of limit cycles. As an example we prove the following corollary, where we find a infinite sequence of limit cycles accumulating at the origin.

\begin{corollary}\label{co2}
Given $0<\al<(-1+\sqrt{3})/2$, if $\g=1$ and $h(y)=\al y^2\sin(1/y)$ for $y>0$ with $h(0)=0$, then for each $k=1,2,\cdots$ there exists a limit cycle of system \eqref{s1} cutting the $y$--axis at the points $(0,1/(k\pi))$ and $(0,-e^{-\pi}/(k\pi))$ being stable (unstable) for $k$ even (odd). These limit cycles are nested and surround the origin, which is a stable singular point of focus type.
\end{corollary}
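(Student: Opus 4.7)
The plan is a direct application of Theorem \ref{t1} to the specific $h(y)=\alpha y^{2}\sin(1/y)$ with $\gamma=1$: verify the three hypotheses, read off the positive zeros of $h$ to locate the limit cycles, use the sign of $h'$ at those zeros to determine stability, and then deduce the remaining geometric assertions (nesting and behaviour at the origin) from general considerations.

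The analytic core is the verification that $h$ satisfies $H1'$, $H2'$ and $H3'$ on $(0,\infty)$, where the constant $(-1+\sqrt{3})/2$ turns out to be sharp. The key tool is the sinc inequality $|y\sin(a/y)|\le a$ valid for all $y>0$ and $a>0$. For $H1'$ this immediately gives $|h(y)|\le\alpha y<y$. For $H2'$ and $H3'$ I would compute $h'(y)=2\alpha y\sin(1/y)-\alpha\cos(1/y)$ and expand the two quantities $h(y)(2-2h'(y))/y$ and $-h(y)(2+2h'(y))/y$; each becomes a sum of three terms of the form $\pm 2\alpha y\sin(1/y)-4\alpha^{2}y^{2}\sin^{2}(1/y)+\alpha^{2}y\sin(2/y)$. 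The squared middle term is non-positive and can be dropped, while $|y\sin(1/y)|\le 1$ and $|y\sin(2/y)|\le 2$ bound the remaining two terms by $2\alpha$ and $2\alpha^{2}$, giving the uniform estimate $2\alpha+2\alpha^{2}$. This is strictly less than $1$ iff $2\alpha^{2}+2\alpha-1<0$, i.e., $\alpha<(-1+\sqrt{3})/2$, which is exactly the hypothesis.

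With Theorem \ref{t1} in force, the positive zeros of $h$ are exactly $y_{k}^{*}=1/(k\pi)$ for $k\in\N$, and each produces a periodic orbit meeting the $y$-axis at $(0,1/(k\pi))$ and $(0,-e^{-\pi}/(k\pi))$ (using $\gamma=1$ in the formula of Theorem \ref{t1}). The derivative there is $h'(y_{k}^{*})=-\alpha\cos(k\pi)=(-1)^{k+1}\alpha$, so the limit cycle is stable for $k$ even and unstable for $k$ odd. Nesting follows from uniqueness of trajectories: the limit cycles cross the positive $y$-axis at the distinct points $(0,1/(k\pi))$ and therefore cannot intersect, and each one encloses the origin since it crosses both the positive and negative $y$-axis while avoiding $0$. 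As $k\to\infty$ they shrink to the origin. For Lyapunov stability of the origin, any neighborhood $U$ contains some stable limit cycle $\Gamma_{2m}$; since no orbit can cross $\Gamma_{2m}$, the bounded region it encloses is forward-invariant and contained in $U$, so every trajectory starting close enough to $0$ stays in $U$.

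The sharp constant $(-1+\sqrt{3})/2$ is what I expect to be the most delicate step: it requires simultaneously applying the sinc inequality at scales $1/y$ and $2/y$ and recognising the favourable sign of the squared term. A coarser uniform estimate, for example using $|h'|\le 3\alpha$ throughout, would only yield a strictly weaker range such as $\alpha<(-1+\sqrt{7})/6$ and miss the advertised bound.
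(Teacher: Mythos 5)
Your proposal is correct and follows essentially the same route as the paper: verify $H1'$--$H3'$ via the inequality $|\sin(1/y)|<1/y$, drop the sign-definite term $\pm 4\al^2y^3\sin^2(1/y)$, and reduce everything to $2\al(1+\al)<1$, which is exactly the condition $\al<(-1+\sqrt3)/2$; then read off the zeros $1/(k\pi)$ and the sign of $h'$ there and apply Theorem \ref{t1}. You additionally spell out the nesting and the stability of the origin, which the paper leaves implicit, and your only overreach is calling the constant ``sharp,'' since the argument only shows it is sufficient.
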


Corollaries \ref{co1} and \ref{co2} are proved in Section \ref{sec2}. Note that the function $h$ is bounded and that its upper bound can be taken as small as desired. Thus, we do not need a big perturbation to obtain as much limit cycles as wanted. Also note that the oscillating line used to define the discontinuity set or switching manifold has the same effect for getting several limit cycles than the one achieved in continuous Li\'enard systems with an oscillating function, see \cite{LP} and \cite{LPZ}. 

To finish, we emphasize that with a suitable choice of function $h$ one can also get as much semi-stable limit cycles as you want. We call semi-stable limit cycles the isolated periodic orbits that are stable from the interior and unstable from the exterior or vice versa. Thus, we next state our last result. 

\begin{corollary}\label{co3}
If $0<\g<\sqrt{3/13}$ and
\begin{equation}\label{h}
h(y)=\dfrac{2\g}{(\g^2+1)\pi}\left\{\begin{array}{lr}1-\cos(\pi y), & 0\le y\le 2n+1,\\
2, & y>2n+1,\end{array}\right.
\end{equation}
then system \eqref{s1} has exactly $n$ limit cycles for any $n\in\N$. These limit cycles are nested and surround the origin, which is a stable singular point of focus type. The limit cycles cut the $y$--axis at the points $(0,2k)$ and $(0,-2k\exp(-\pi\g))$ for $k=1,\dots,n$, being all of semi-stable type. 
\end{corollary}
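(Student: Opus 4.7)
The plan is to deduce Corollary \ref{co3} from Theorem \ref{t1}, augmenting it with a sign-of-displacement analysis because every cycle in this example will satisfy $h'(y^*)=0$ and thus fall outside the scope of the stability criterion of the theorem. The first step, entirely parallel to the verification carried out for Corollary \ref{co1}, is to check that $h$ is $\C^1$ with $h(0)=0$ and satisfies Hypotheses $H1'$--$H3'$ under the restriction $0<\g<\sqrt{3/13}$. A direct computation gives $(1+\g^2)h'(y)=2\g\sin(\pi y)$, so $2\g\pm(1+\g^2)h'(y)=2\g(1\pm\sin(\pi y))\ge 0$; combined with $h(y)\ge 0$, this makes $H3'$ automatic and reduces $H2'$ to the scalar inequality
\[
\frac{4\g^2}{(\g^2+1)\pi}\bigl(1-\cos(\pi y)\bigr)\bigl(1-\sin(\pi y)\bigr)<y, \qquad y>0,
\]
which the choice $\g^2<3/13$ is calibrated to enforce via a uniform bound on $(1-\cos t)(1-\sin t)/t$; $H1'$ is analogous but easier.

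By Theorem \ref{t1}, a periodic orbit through $(h(y^*),y^*)$ exists if and only if $h(y^*)=0$. Since $h$ equals the nonzero constant $\frac{4\g}{(\g^2+1)\pi}$ for $y>2n+1$ and vanishes on $(0,2n+1]$ precisely at $y\in\{2,4,\ldots,2n\}$, there are exactly $n$ limit cycles, each intersecting the $y$-axis at the claimed points $(0,2k)$ and $(0,-2k\,e^{-\pi\g})$.

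The crux is the semi-stability. At $y_0=2k$ one finds $h'(y_0)=\frac{2\g}{\g^2+1}\sin(2k\pi)=0$ and $h''(y_0)=\frac{2\g\pi}{\g^2+1}>0$, so $y_0$ is a strict local minimum of $h$ with $h(y_0)=0$, and hence $h(y)>0$ throughout $(0,\infty)\setminus\{2,4,\ldots,2n\}$. I would introduce the displacement function $d(y^*)=P(y^*)-y^*$ of the Poincar\'e return map $P$ on the upper switching manifold: by Theorem \ref{t1}, $d$ vanishes on $(0,\infty)$ exactly at the points $y^*=2k$, so by continuity $d$ has constant sign on each of the intervals $(0,2),(2,4),\ldots,(2n,\infty)$. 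The plan is to extract from the proof of Theorem \ref{t1} in Section \ref{sec2} the stronger assertion that $\sgn d(y^*)=-\sgn h(y^*)$ in a neighbourhood of each zero of $d$; granting this, $d<0$ on each of the intervals above, so $P(y^*)<y^*$ throughout, every $y_0=2k$ is approached by $P$ from above and repelled from below (that is, semi-stable), and the same sign on $(0,2)$ yields the stability of the origin as a focus.

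The main obstacle is the sign identity $\sgn d(y^*)=-\sgn h(y^*)$ when $h'(y_0)=0$: in the generic case of Theorem \ref{t1} it comes from a first-order expansion via $h'(y_0)\ne 0$, whereas here $h(y^*)$ vanishes to order two at $y_0$ and the return map must be expanded further. Fortunately, the closed forms $e^{tG^{\pm}}=e^{\pm\g t}(\cos t\cdot I+\sin t\cdot M^{\pm})$ with $(M^{\pm})^2=-I$ yield clean implicit equations for the two successive flight times, and a direct computation shows that the first nonvanishing term of $d(y^*)$ near $y_0$ is a positive multiple of $-h(y^*)^3$---nonpositive since $h\ge 0$---which is precisely the one-sided behaviour required for semi-stability and completes the proof.
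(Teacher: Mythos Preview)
Your outline is sound and would produce a valid proof, but the route you take for the semi-stability step is more laborious than the paper's, and your description of the ``main obstacle'' reflects a slight misreading of how the sign of the displacement is obtained in Section~\ref{sec2}.

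The paper does not derive the sign relation between the displacement and $h$ from a first-order expansion at all. Inside the proof of Theorem~\ref{t1} there is an auxiliary result (Lemma~\ref{L1}) stating that for \emph{every} $y>0$ the displacement $f(y)$ has the same sign as $h(y)$; this is proved by writing $f(y)=e^{-\pi\gamma}F_y(h(y))$ with $F_y$ an odd function of its argument and showing $F_y$ is strictly increasing on $(0,y/\gamma)$ via a monotonicity argument. From this, Lemma~\ref{L2}(i)--(ii) immediately give one-sided stability of a periodic orbit at $y^*$ from the sign of $h$ on either side of $y^*$, with no hypothesis on $h'(y^*)$. Since here $h(y)>0$ for all $y\in(0,\infty)\setminus\{2,4,\dots,2n\}$, each limit cycle is unstable from the interior and stable from the exterior, i.e.\ semi-stable, and the origin is a stable focus. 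This is exactly the argument the paper sketches when it says the proof of Corollary~\ref{co3} follows from the first two statements of Lemma~\ref{L2}.

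Your alternative, expanding the full Poincar\'e return map and identifying the first nonvanishing term as a positive multiple of $-h(y^*)^3$, is correct (it is equivalent to $F_y(x)\sim c\,x^3$ with $c>0$, consistent with the paper's computation $f'''(y^*)\propto h'(y^*)^3$), and it delivers the same one-sided conclusion. What it buys you is independence from the monotonicity argument in Lemma~\ref{L1}; what it costs is a longer calculation that the paper avoids entirely. In short: your plan works, but the ``obstacle'' you flag is already handled in the paper by Lemma~\ref{L1}, so no higher-order expansion is actually needed.
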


Corollary \ref{co3} can be proved in a very similar way than Corollary \ref{co1}; in fact, its proof (to be omitted for sake of brevity) is even easier since the function $h$ is non-negative. In showing the semi-stable character of limit cycles, first two statements of Lemmma \ref{L2} should be taken into account, see below. 

\section{Proofs of Theorem \ref{t1} and Corollaries \ref{co1} and \ref{co2}}\label{sec2}
The proof of Theorem \ref{t1} is made by constructing a displacement function of points of kind $(h(y),y)$. Since system \eqref{s1} has a focus at the origin in the both sides, we obtain this displacement function by computing the difference between the position of the fist return to the section $\{x=0\,,\,y<0\}$ in forward time and the position of the fist return to the section $\{x=0\,,\,y<0\}$ in backward time considering the flow starting in $(h(y),y)$.  

\begin{proof}[Proof of Theorem \ref{t1}]
We start by computing
\begin{equation*}
\langle \nabla H(h(y),y)\,,\,G^{\pm}(h(y),y)\rangle=(1,-h'(y))^T\left(\begin{array}{l}\pm 2\g h(y)-y\\(\g^2+1)h(y)\end{array}\right),
\end{equation*}
so that, from Hypotheses $H2'$ and $H3'$, we get 
\[
\begin{array}{l}
\langle \nabla H(h(y),y)\,,\,G^+(h(y),y)\rangle=-y+h(y)\left(2\g-(1+\g^2)h'(y)\right)<0,\quad\textrm{and}\vspace{0.2cm}\\
\langle \nabla H(h(y),y)\,,\,G^-(h(y),y)\rangle=-y+h(y)\left(-2\g-(1+\g^2)h'(y)\right)<0.
\end{array}
\]

Therefore for $y>0$ the flow of system \eqref{s1} in all points $(h(y),y)$ crosses always $\Sigma$ from the right to the left, all becoming crossing points, in the usual terminology of Filippov systems, see  \cite{KRG}. In other words, excepting at the origin, the two vector fields have with respect to $\Sigma$ nontrivial normal components of the same sign.

\smallskip 

Let $\f^+(t,x,y)=\left(\f^+_1(t,x,y),\f^+_2(t,x,y)\right)$
be the solution of system \eqref{s1} for $x>0$ such that
$\f^+(0,x,y)=(x,y)$, and let
$\f^-(t,x,y)=\big(\f^-_1(t,x,y), \f^-_2(t,x,y)\big)$ be
the solution of system \eqref{s1} for $x<0$ such that
$\f^-(0,x,y)=(x,y)$. Since system \eqref{s1} is piecewise linear, this solutions can be easily computed as
\begin{equation}\label{solutions}
\begin{array}{l}
\f^{\pm}_1(t,x,y)=e^{\pm\g t} \left[(\pm\g x-y)\sin t +x \cos t\right],\vspace{0.2cm}\\

\f^{\pm}_2(t,x,y)=e^{\pm\g t} \left[\left(\g^2 x\mp\g y+x\right)\sin t +y \cos t\right].
\end{array}
\end{equation}

Let $\pi/2<t_L(y)<3\pi/2$ be the smallest positive time such that $\f^-_1(t_L(y),h(y)$ $,y)=0$ and $\f^-_2(t_L(y),h(y),y)<0$. We note that $t_L(y)>\pi$ if $h(y)>0$, $t_L(y)=\pi$ if $h(y)=0$, and $t_L(y)<\pi$ if $h(y)<0$. Similarly let $-3\pi/2<t_R(y)<-\pi/2$ be the biggest
negative time such that $\f^+_1(t_R(y),h(y),y)=0$ and $\f^+_2(t_R(y),h(y),y)$ $<0$. We note that $t_R(y)>-\pi$ if $h(y)>0$, $t_R(y)=-\pi$ if $h(y)=0$, and $t_R(y)<-\pi$ if $h(y)>0$. From hypothesis $H1'$ we have $|h(y)|<y/\g$, so $y+\g{h(y)}>0$ and $y-\g{h(y)}>0$, therefore we can easily compute $t_L(y)$ and $t_R(y)$ as
\[
\begin{array}{l}
t_L(y)=\pi+\arctan\left(\dfrac{{h(y)}}{y+\g{h(y)}}\right),\vspace{0.2cm}\\

t_R(y)=-\pi+\arctan\left(\dfrac{{h(y)}}{y-\g{h(y)}}\right).
\end{array}
\]
Accordingly, we have 
$$\cos(t_L(y))=-\dfrac{y+\g{h(y)}}{\sqrt{(y+\g{h(y)})^2+h(y)^2}},\ \  \sin(t_L(y))=-\dfrac{h(y)}{\sqrt{(y+\g{h(y)})^2+h(y)^2}},$$
and
$$\cos(t_R(y))=-\dfrac{y-\g{h(y)}}{\sqrt{(y-\g{h(y)})^2+h(y)^2}},\ \  \sin(t_R(y))=-\dfrac{h(y)}{\sqrt{(y-\g{h(y)})^2+h(y)^2}}.$$
Finally, substituting in \eqref{solutions} and after some standard manipulations, we obtain
$$\f^-_2(t_L(y),h(y),y)=-e^{-\g\pi-\g\arctan\left(\dfrac{{h(y)}}{y+\g{h(y)}}\right)}\sqrt{(y+\g{h(y)})^2+h(y)^2},$$
and
$$\f^+_2(t_R(y),h(y),y)=-e^{-\g\pi+\g\arctan\left(\dfrac{{h(y)}}{y-\g{h(y)}}\right)}\sqrt{(y-\g{h(y)})^2+h(y)^2}.$$
We construct now the displacement function as $f(y)=\f^-_2(t_L(y),h(y),y)-\f^+_2(t_R(y),h(y),y)$, thus
\[
\begin{array}{rl}
f(y)=&e^{-\g\pi+\g\arctan\left(\dfrac{{h(y)}}{y-\g{h(y)}}\right)}\sqrt{y^2-2\g y{h(y)}+\left(1+\g^2\right){h(y)^2}}\vspace{0.2cm}\\

&-e^{-\g\pi-\g\arctan\left(\dfrac{{h(y)}}{y+\g{h(y)}}\right)}\sqrt{y^2+2\g y{h(y)}+\left(1+\g^2\right){h(y)^2}}.
\end{array}
\]

If $y^*>0$ is such that $h(y^*)=0$ it is easy to see that $f(y^*)=0$. Therefore there exists a periodic solutions passing through $(h(y),y)$.  The following auxiliary results, where we prove a little bit more than needed for Theorem \ref{t1}, can be easily shown under the previous hypotheses.
\begin{lemma} \label{L1} If for $y>0$ we have $h(y)>0$ $(h(y)<0)$ then $f(y)>0$ $(f(y)<0)$. \end{lemma}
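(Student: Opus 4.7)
The plan is to reduce the sign of $f(y)$ to the sign of an auxiliary one-variable function $F(v)$ depending only on $v:=h(y)/y$, and then to establish $F'(v)\ge 0$ via a clean derivative computation.

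Set $v:=h(y)/y$. Since $y>0$ and Hypothesis $H1'$ gives $|v|<1/\g$, both $1\pm\g v$ are positive and the arguments of the two arctangents appearing in the explicit formula for $f(y)$ lie in $(-\pi/2,\pi/2)$. Writing $y^2\pm 2\g yh(y)+(1+\g^2)h(y)^2=y^2\bigl[(1\pm\g v)^2+v^2\bigr]$, the factor $y\,e^{-\g\pi}>0$ pulls out of $f(y)$, and the inequality $f(y)>0$ is equivalent (taking logarithms of the two positive summands) to $F(v)>0$, where
\[
F(v):=\g\arctan\!\frac{v}{1-\g v}+\g\arctan\!\frac{v}{1+\g v}-\tfrac12\ln\frac{(1+\g v)^2+v^2}{(1-\g v)^2+v^2}.
\]
Likewise $f(y)<0\iff F(v)<0$, and clearly $F(0)=0$. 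It therefore suffices to show that $F$ has the same sign as $v$ on $(-1/\g,\,1/\g)$.

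Next I would differentiate directly. Each arctangent derivative simplifies to $1/[(1\mp\g v)^2+v^2]$, and after differentiating the logarithmic term the pure $\g$-summands from the arctangents cancel exactly against the $\g$-summands arising from the logarithm, so that only the $(1+\g^2)v$ pieces survive. These combine with opposite signs across the two denominators, and using $(1+\g v)^2-(1-\g v)^2=4\g v$ one obtains the collapse
\[
F'(v)=\frac{4\g(1+\g^2)\,v^2}{\bigl[(1-\g v)^2+v^2\bigr]\bigl[(1+\g v)^2+v^2\bigr]}\;\ge\;0,
\]
with equality only at $v=0$. Hence $F$ is strictly increasing, and since $F(0)=0$ its sign coincides with that of $v$, which in turn matches the sign of $h(y)$. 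This delivers the lemma.

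The main obstacle is purely algebraic bookkeeping: one must verify that the pure-$\g$ terms on the two arctangent fractions cancel precisely against the corresponding pieces coming from the logarithm, leaving an expression that admits the displayed factorization of $v^2$ in the numerator. Should the direct manipulation prove delicate, an alternative is to invoke the addition formula $\arctan\al+\arctan\be=\arctan\frac{\al+\be}{1-\al\be}$ to rewrite the sum of the two arctangents as a single $\arctan\bigl(2v/(1-(1+\g^2)v^2)\bigr)$ (on a neighborhood of $v=0$) and compare directly with the logarithmic term; either route yields the same conclusion.
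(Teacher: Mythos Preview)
Your argument is correct. The reduction to the ratio $v=h(y)/y$, the logarithmic rewriting, and the derivative computation all check out; in particular the cancellation of the $\gamma$-terms is exactly as you describe, and the final formula $F'(v)=4\gamma(1+\gamma^2)v^2\big/\!\bigl[\bigl((1-\gamma v)^2+v^2\bigr)\bigl((1+\gamma v)^2+v^2\bigr)\bigr]$ is right.

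The paper's proof follows the same overall plan---reduce to a one-variable auxiliary function, show it is monotone by differentiating, and use the value at the origin---but implements it differently. Instead of taking logarithms and working with the ratio $v$, the paper keeps $y$ fixed and writes $f(y)=e^{-\pi\gamma}F_y(h(y))$ with $F_y(x)=\delta_y(x)-\delta_y(-x)$ for a positive function $\delta_y$; to avoid differentiating square roots it passes to the difference of squares $\delta_y(x)^2-\delta_y(-x)^2$, differentiates that with respect to $x$, and then argues sign by comparing two exponentials. The odd symmetry $F_y(-x)=-F_y(x)$ handles the case $h(y)<0$. Your logarithmic route is a bit more streamlined and yields a closed-form derivative with a visible $v^2$ factor, whereas the paper's squaring trick trades the square roots for an exponential inequality that still requires a short sign argument. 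Both are short and elementary; yours is marginally cleaner.
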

\begin{proof}
We show first that if $h(y)>0$ then $f(y)>0$. To see this, we consider for a fixed $y>0$ the function
\[
\begin{array}{rl}
F_y(x)=&e^{\g\arctan\left(\dfrac{{x}}{y-\g{x}}\right)}\sqrt{y^2+2\g y{x}+\left(1+\g^2\right){x^2}}\vspace{0.2cm}\\

&-e^{-\g\arctan\left(\dfrac{{x}}{y+\g{x}}\right)}\sqrt{y^2-2\g y{x}+\left(1+\g^2\right){x^2}}.
\end{array}
\]
Clearly $f(y)=e^{-\pi\g}F_y(h(y))$. We note that $F_y(0)=0$ and 
$$F_y(x)=\delta_y(x)-\delta_y(-x),$$ where

$$\delta_y(x)=e^{\g\arctan\left(\dfrac{{x}}{y-\g{x}}\right)}\sqrt{y^2+2\g y{x}+\left(1+\g^2\right){x^2}}.$$
Since we are dealing with a difference of two positive terms, for determining its sign we can work with the difference of squares, avoiding so to deal with square roots. Now the derivative $\delta_y(x)^2-\delta_y(-x)^2$ with respect to $x$ simplifies to
$$2x\left(1+\g^2\right)e^{2\g\arctan\left(\dfrac{{x}}{y-\g{x}}\right)}-2x\left(1+\g^2\right)e^{-2\g\arctan\left(\dfrac{{x}}{y+\g{x}}\right)},$$
which is obviously positive for all $0<x<y/\g$. Then $F_y(x)$ is monotone increasing for the same range, and we can assure that $f(y)=e^{-\pi\g}F_y(h(y))$ is positive.

Since $F_y(-x)=-F_y(x)$, the case $h(y)<0$ is a direct consequence of the above reasoning and the lemma follows.
\end{proof}
\begin{lemma} \label{L2} Assume $y^*>0$ such that $h(y^*)=0$. The following statements hold.
\begin{itemize}
\item[(i)] If there exists $\varepsilon>0$ such that $h(y)<0$ $(h(y)>0)$ for $y^*-\varepsilon<y<y^*$ then the periodic orbit passing for $(0,y^*)$ is stable (unstable) from the interior.
\item[(ii)] If there exists $\varepsilon>0$ such that $h(y)>0$ $(h(y)<0)$ for $y^*<y<y^*+\varepsilon$ then the periodic orbit passing for $(0,y^*)$ is stable (unstable) from the exterior.
\item[(iii)] If $h'(y^*)>0$ $(h'(y^*)<0)$ then there exist a periodic solution passing for $(0,y^*)$ which is a stable (unstable) limit cycle.
\end{itemize}
\end{lemma}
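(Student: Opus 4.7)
The plan is to reduce all three statements to the sign of the displacement function $f$ supplied by Lemma \ref{L1}, by setting up the Poincar\'e return map on the discontinuity manifold $\Sigma$ and exploiting its monotonicity. Parametrize $\Sigma$ by $y>0$ through the point $(h(y),y)$. Using the half-maps $L(y):=\f^-_2(t_L(y),h(y),y)$ and $B(y):=\f^+_2(t_R(y),h(y),y)$ already computed in the proof of Theorem \ref{t1}, one full revolution of the piecewise flow sends $(h(y),y)\in\Sigma$ to $(h(\Pi(y)),\Pi(y))\in\Sigma$, where $\Pi$ is determined by $B(\Pi(y))=L(y)$, i.e.\ $\Pi=B^{-1}\circ L$. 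In this notation $f(y)=L(y)-B(y)$, and $\Pi(y^*)=y^*$ whenever $h(y^*)=0$.

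Second, I would invoke the fact established at the start of the proof of Theorem \ref{t1} that every point of $\Sigma$ is a crossing point, so orbits of \eqref{s1} are uniquely defined and cannot intersect each other in the plane; this forces $\Pi$ to be an orientation-preserving, hence monotone increasing, homeomorphism near $y^*$. Since moreover $B$ is strictly decreasing in $y$ (a short direct check from the closed form under $H1'$), the identity $B(\Pi(y))=L(y)$ yields the key equivalence
\[
\Pi(y)<y\iff L(y)>B(y)\iff f(y)>0,
\]
and the reversed statement for $f(y)<0$. Combining with Lemma \ref{L1} gives $\mathrm{sign}(\Pi(y)-y)=-\mathrm{sign}(h(y))$ on a punctured neighborhood of $y^*$.

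From here each of the three assertions becomes a standard convergence statement for a continuous monotone increasing map with fixed point at $y^*$. For (i), if $h(y)<0$ on $(y^*-\e,y^*)$ then $\Pi(y)>y$ there, and since $\Pi$ is increasing with $\Pi(y^*)=y^*$ we have $y<\Pi(y)<y^*$; iterating produces a monotone sequence $\Pi^k(y)\nearrow y^*$, which is stability of the periodic orbit from the interior. Statement (ii) and the parenthetical instability cases follow identically after swapping the sign of $h$ and/or the side of $y^*$. For (iii), if $h'(y^*)>0$ then $h$ changes sign from negative to positive at $y^*$, so (i) and (ii) apply simultaneously and the limit cycle is stable; $h'(y^*)<0$ yields two-sided repulsion by the same reasoning.

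The main technical step is the monotonicity and orientation of $\Pi$; I would obtain it from the standard non-crossing property of planar orientation-preserving flows applied to the crossing Filippov system, which is conceptually cleaner than differentiating the closed-form expressions for $L$ and $B$.
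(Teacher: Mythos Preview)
Your argument is correct and is essentially the approach the paper has in mind: the paper's own proof of Lemma~\ref{L2} is the single sentence ``the three statements follow from the standard properties of displacement function and Lemma~\ref{L1}'', and what you have written is precisely a careful unpacking of that sentence via the return map $\Pi=B^{-1}\circ L$ on $\Sigma$. One small remark: the monotonicity of $B$ is cleaner to obtain from the same transversality/non-crossing argument you use for $\Pi$ (the right orbits form a nested spiral foliation and $\Sigma$ is transversal to it by $H2'$) than from the closed formula, so you might phrase it that way rather than as a ``direct check''. The paper also records, as an alternative for (iii) only, the computation $f'(y^*)=f''(y^*)=0$ and $f'''(y^*)=8\gamma(1+\gamma^2)e^{-\pi\gamma}h'(y^*)^3/y^{*2}$, which confirms the non-hyperbolic nature of the limit cycles; your route avoids this calculation entirely.
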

\begin{proof}
The three statements follow from the standard properties of displacement function and Lemma \ref{L1}. 

In the case of statement (iii), an alternative proof can be obtained by direct computations of derivatives of the displacement function $f$ at $y^*$. The expressions are quite long but simplify a lot after substituting $h(y^*)=0$. One obtains $f'(y^*)=0$, so that the limit cycles are non-hyperbolic and we need to resort to successive derivatives, getting $f''(y^*)=0$ and \[
f'''(y^*)=\dfrac{8\g(1+\g^2)e^{-\pi\g} h'(y^*)^3}{y^2}.
\]
We conclude again  that the periodic solution is a stable limit cycle if $h'(y^*)>0$, and an unstable limit cycle if $h'(y^*)<0$.
\end{proof}

\smallskip

From Lemma \ref{L2}, Theorem \ref{t1} is shown.  \end{proof}


It follows the proofs of corollaries.

\begin{proof}[Proof of Corollary \ref{co1}] It is easy to see that $$|h'(y)|\le \dfrac{2\g}{\g^2+1},$$
and to fulfill Hypothesis $H1'$ we should need 
$$\dfrac{2\g}{\g^2+1}<\dfrac{1}{\g},
$$
which is true for all $0<\g<1$. 

\smallskip

Furthermore, for $y\leq (2n+1)/2$ we have
\[
h(y)(2\g\pm(1+\g^2)h'(y))=\dfrac{4\g^2}{(\g^2+1)\pi}\sin(\pi y)(1\pm\cos(\pi y)).
\]
We note that for $0<\g<\sqrt{3/5}$ the inequality $8\g^2/(\pi+\pi\g^2)<1$ hold. Again using that $|\sin(y)|<y$ for $y>0$ we obtain
\[
\begin{array}{l}
h(y)(2\g-(1+\g^2)h'(y))<\dfrac{8\g^2}{(1+\g^2)\pi}y<y,\quad\textrm{and}\vspace{0.2cm}\\
h(y)(2\g+(1+\g^2)h'(y))>-\dfrac{8\g^2}{(1+\g^2)\pi}y>-y.
\end{array}
\]
So the Hypotheses $H2'$ and $H3'$ hold for $y\leq (2n+1)/2$.

\smallskip

Now for $y>(2n+1)/2\ge 3/2$ we have $h'(y)=0$, so that
\[
h(y)(2\g\pm(1+\g^2)h'(y))=\dfrac{4\g^2}{(\g^2+1)\pi}(-1)^{n},\]
and the inequalities in Hypotheses $H2'$ and $H3'$ trivially hold.

\smallskip

Computing the zeros of the function $h$ and applying Theorem \ref{t1} we conclude that system \eqref{s1} has exactly $n$ limit cycles for any given $n\in\N$ cuting the $y$-axis at the points $(0,k)$ and $(0,-k\exp(-\pi\g))$ for $k=1,\dots,n$.
\end{proof}
 
\begin{proof}[Proof of Corollary \ref{co2}]
We note first that the hypothesis $0<\al<(-1+\sqrt{3})/2$ implies that $2\al(1+\al)<1$. Thus, we also have $\al<1$, so that using the inequality $|\sin(1/y)|<1/y$ for $y>0$, it is easy to see that $|h(y)|<\al y<y$, and Hypothesis $H1'$ holds.

\smallskip
Since $\gamma=1$, we have
\[
h(y)(2\g\pm(1+\g^2)h'(y))=2h(y)(1\pm h'(y)),
\]
and then
\[
2h(y)(1\pm h'(y))=
2\al y^2\sin\left(\dfrac{1}{y}\right)\mp2\al^2y^2\sin\left(\dfrac{1}{y}\right)\cos\left(\dfrac{1}{y}\right)\pm 4\al^2 y^3\sin^2\left(\dfrac{1}{y}\right).
\]
So using again the inequality $|\sin(1/y)|<1/y$ for $y>0$ we obtain
\[
\begin{array}{l}
2h(y)(1-h'(y))<2\al y+2\al^2y=2\al(1+\al)y<y,\quad\textrm{and}\vspace{0.2cm}\\
2h(y)(1+h'(y))>-2\al y-2\al^2y=-2\al(1+\al)y>-y.
\end{array}
\]
Hence the inequalities in Hypotheses $H2'$ and $H3'$ hold for $y>0$.

\smallskip

Computing the zeros of the function $h$ and applying Theorem \ref{t1}, we conclude the proof of the corollary.
\end{proof}

\section*{Acknowledgements}

The first author is partially supported by a FAPESP grant
2013/16492--0 and by a CAPES CSF-PVE grant 88881.030454/2013-01. The second author was supported by MINECO/FEDER grant MTM2012-31821 and by the {\it Consejer\'{\i}a de Econom\'{\i}a, Innovaci\'on, Ciencia y Empleo de la Junta de Andaluc\'{\i}a} under grant P12-FQM-1658.

\end{document}